\newtheorem{theorem}{Theorem}[section]
\newtheorem{lemma}[theorem]{Lemma}
\theoremstyle{definition}
\newtheorem{corollary}[theorem]{Corollary}
\newtheorem{example}[theorem]{Example}
\newtheorem{remark}[theorem]{Remark}
\newtheorem{proposition}[theorem]{Proposition}
\def\A{{\mathcal{A}}}
\def\S{\Sigma}
\def\Ss{\Sigma^{\ast}}
\def\d{{\delta}}
\newcommand{\fracd}[2]{{\displaystyle\frac{#1}{#2}}}
\newcommand{\bi}{\begin{itemize}}
\newcommand{\ei}{\end{itemize}}
\newcommand{\ben}{\begin{enumerate}}
\newcommand{\een}{\end{enumerate}}
\author{\textsf{Elizaveta Frenkel},  \textsf{Vladimir N. Remeslennikov}\footnote{The second author was supported by following grants: results of section \ref{sec:comp} and Theorem \ref{th:dbcone} in particular was supported by RNF grant 14-11-00085 and the rest of the paper was written with a support of RFFI grant 14-01-00068.}}
\title{Measuring cones and other thick subsets in free groups}
\begin{document}

\maketitle


\begin{abstract} In this paper we investigate the special automata over finite rank free groups and estimate asymptotic characteristics of sets they accept. We show how one can decompose an arbitrary regular subset of a finite rank free group into disjoint union of sets accepted by special automata or special monoids. These automata allow us to compute explicitly generating functions, $\lambda-$measures and Cesaro measure of thick monoids. Also we improve the asymptotic classification of regular subsets in free groups.
\end{abstract}

Mathematics Subject Classification 2010: 20E05, 20F65.

Key words: free group, $\lambda-$measure, generating function, regular subset, special automaton, thick monoid.

\section{Introduction}

This paper continue the series of papers written by different authors \cite{multiplicative,af_semr,fmrI,fmrII,dc}. More specifically, we expand the results of \cite{almaz} and give their proves. We return to the question of asymptotic classification of regular subsets in finite rank free groups, thus being motivated by needs of universal algebraic geometry. Namely, having in mind the notion of an $A-$dimension function over arbitrary algebraic structure introduced by the second author and its applications in different algebraic systems (see \cite{DMR}), we have started to prepare the algebraic and algorithmic foundations for a suitable dimension function in group theory. In particular, in a sequel paper we are going to present such an algorithm for regular subsets of finite rank free groups over certain group $A$.
However, the existing asymptotic classification of sets, appeared first in \cite{multiplicative} and then refined in \cite{af_semr} does not allow us to fulfill this task. To reveal the problem, we formulate these results (see Section \ref{sec:notations} for definitions):

{\bf Theorem. \label{th:regular-negl-thick} }{\it  Let $F$ be a finite rank free group. Then
\begin{itemize} 
\item [1)] every regular subset of $F$ is either thick or exponentially negligible;
\item [2)] a regular subset of $F$ is thick if and only if its prefix closure contains a cone.
\end{itemize} }

As we shall see below, all the necessary computations can be easily done in the case of regular exponentially negligible sets. The missing bit consists in more specific characterisation of thick sets and in finding the way to distinguish between them in a finer way. The present paper covers these problems.
The following theorem adds to our knowledge on how does the thick sets look like; these new details turn out to be crucial as we shall show in section \ref{sec:comp}:

{\bf Theorem \ref{thick}. }{\it   A regular subset $R$ of $F$ is thick if and only if it contains a subset $w\circ T$, with $T$ being a thick monoid and $w \in F$.}

This theorem was formulated in \cite{almaz}, Theorem 8, in slightly different form; this fact is related to another understanding of the word ``contain''. 
Another important results of the current paper concerns new algorithms for the computation of the generating functions and Cesaro measure of sets recognised by so-called special automata and thick monoids. The algorithms we suggest appears to be easier with respect to the older ones.

Now, a few words on the structure of the paper.

In Section \ref{sec:notations} we give some basics on regular sets and recall techniques  for measuring subsets
in  a free group $F$ and the asymptotic classification of regular sets. In subsection \ref{sub:gen_alg1} we also provide Algorithm I for computation of the generating function of a regular set by means of linear algebra.

Section \ref{sec:specials} starts with the definition of a special automaton over monoid and group. Further on we prove that every regular subset $L$ of a finite rank free group $F$ can be represented as a finite disjoint union of languages accepted by certain type of automata (see Proposition \ref{Decomp1}), which is going to be crucial property of the special automata in a context of both current paper and the construction of a dimension function in free groups. In Lemma \ref{lenondegen} we also show how one can split the sets accepted by special automata, which leads to the notion of a thick monoid.
Further in this section we  analyse the structure and compute the most important asymptotic characteristics of thick monoids: the generating function and the Cesaro measure among the most important of them (see Proposition \ref{notxcomplete} and Lemma \ref{le:thmonlang}). In Theorem \ref{thick} we improve already mentioned result on the asymptotic classification of regular sets. We conclude Section \ref{sec:specials} with Algorithm II computing $\lambda-$measure of a negligible set accepted by a special automaton.

The next Section \ref{sec:comp} is dedicated to computations and also it reassumes the results from above. Preliminary calculations made in Lemma \ref{nonio} allows us to compute the generating function of an arbitrary double-based cone (see Theorem \ref{th:dbcone}). We want to emphasize that this is a crucial theorem for all the paper, interesting {\emph{ per se}}, applied in Lemma \ref{le:thmonlang}, and having a lot of structural and computational consequences. In particular, Theorem \ref{th:dbcone} reduces Algorithm I to much more straightforward combinatorial calculations and formulae that does not use linear algebra methods.


\section{Regular sets in free groups}\label{sec:notations}
In this section we recall the main definitions and tools of particular interest for our purposes.
\subsection{Regular sets: some properties}\label{sub:reg} We assume that the reader is familiar with basic facts on regular sets in monoids and groups (described in details, for example, in \cite{eps,lawson}). Let $X = \{ x_1, \ldots, x_m \}$ be an alphabet and define $\S$ to be the letters of $X$ with their formal inverses: $\S = X \cup X^{-1}$. Let $F=F(X)$ be the free group generated by $X$.
A {\em finite state automaton} $\A$ is a quintuple $(S, \S,\d,I,Z)$, where $S$ is a finite set of states, $\S$ is an alphabet, $I \subset S$ is the (non-empty) set of initial states, $Z \subseteq S$ is the set of final states, and $\d$ is a set of arrows with labels in the enlarged alphabet $\S \cup \varepsilon$ (here $\varepsilon$ is assumed not to lie in $\S$). Further, a {\em deterministic automaton} can be considered a special case of a finite state automaton, with no arrows labelled $\varepsilon$, the only one initial state and each state being the source of exactly one arrow with any given label from $\S$.
By the Kleene-Rabin-Scott theorem, all regular subsets over $\S$ (i.e. the closure of finite subsets of free monoid over $\S$ under the rational operations) are exactly the sets accepted by a finite state automaton over $\S \cup \epsilon$, or, equivalently, accepted by a deterministic automaton over $\S$. The language accepted by an automaton $\A$ we shall denote by $L=L(\A)$.

\subsection{Multiplicative measures: basics and first algorithms}\label{sub:gen_alg1}
We  denote by $|f|$ the length of an element $f \in F$, and let $S_k = \{\, w \in F \mid |w|=k\,\}$ denote the sphere of radius $k$ in $F$. We consider a subset $R$ of $F$,  and denote by $f_k(R)= \frac{\vert R \cap S_k\vert}{\vert S_k\vert}$ the {\em  frequency} of elements from $R$ among the words of length $k$ in $F$.

\paragraph{$\lambda-$ measure.} An important measuring tool in $F$ is the so-called frequency measure, introduced in \cite{multiplicative} and studied in \cite{fmrI} and \cite{fmrII}. By definition, $$ \lambda(R) = \sum_{k=0}^\infty f_k(R).$$
A subset $R \subseteq F$ is called {\em $\lambda$-measurable}, if $\lambda(R) < \infty$, and {\em exponentially $\lambda-$measurable} if there exists a positive constant $\delta <1$ such that $f_k(R) < \delta^k$ for big enough $k$. We adjuste this measure to obtain $\lambda^*(R) = \frac{2m}{2m-1}\lambda(R)$.

\paragraph{Generating function.} One can consider the {\em (frequency) generating function} for $R$ as a formal series
in $R[[t]]$: $g_R(t) = \sum_{k=0}^\infty f_k(R) t^k$. We shall also use the {\em adjusted} version of this function: $g^{\ast}_R(t) =\frac{2m}{2m-1} \cdot g_R(t)$.
In case of regular subsets of $F$ the generating function can be described in a very concise form:

\begin{theorem}\label{observ_g} For a regular set $R \subseteq F$ the function $g_R(t)$ is a rational function of $t$ with rational coefficients and either
\begin{itemize}

\item has no singularity at $t=1$ (in this case $R$ is exponentially $\lambda-$measurable) or

\item has a simple pole at $t=1$ (in this case $R$ is thick\footnote{The rationality of $g_R(t)$ for regular sets is well known (for instance, in \cite{flajolet}; it follows also from Algorithm I for $g_R(t)$ below), while statements about asymptotic properties of $R$ follow from asymptotic classification of regular sets shown in \cite{multiplicative,af_semr}.}).
\end{itemize}

In particular,
\begin{equation}\label{mu0} Res_{1}g_R(t)= - \mu_0(R).
\end{equation}
\end{theorem}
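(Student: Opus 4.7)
The plan is to reduce the statement to the transfer-matrix analysis of finite automata combined with the asymptotic dichotomy quoted at the start of the section from \cite{multiplicative,af_semr}. First, since the set of reduced words over $\S$ is itself regular, I would realise $R$ as the language of a deterministic automaton $\A = (S, \S, \d, I, Z)$ that accepts only reduced words. Writing $M$ for the transition matrix ($M_{pq} = \#\{a \in \S : \d(p,a) = q\}$) and $u$, $v$ for the characteristic vectors of $I$ and $Z$, one has $r_n := |R \cap S_n| = u^T M^n v$, so the integer-coefficient generating function $\sum_n r_n\, t^n = u^T(I - tM)^{-1} v$ lies in $\mathbb{Q}(t)$. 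Using $|S_n| = 2m(2m-1)^{n-1}$ for $n \geq 1$, the function $g_R(t) = \sum_n f_n(R)\, t^n$ is obtained from the above by the substitution $t \mapsto t/(2m-1)$ together with a constant-term correction, and so is rational over $\mathbb{Q}$.

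To locate the behaviour at $t = 1$, I would invoke the dichotomy: $R$ is either thick or exponentially $\lambda$-measurable. In the latter case $f_n(R) < \delta^n$ for some $\delta < 1$, so $g_R$ has radius of convergence at least $1/\delta > 1$ and is therefore analytic at $t = 1$. If instead $R$ is thick, the frequencies $f_n(R)$ do not tend to $0$, so the series $g_R(1) = \sum f_n(R)$ diverges; since $g_R$ is rational, this forces a genuine singularity at $t = 1$.

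The main obstacle is to show that this singularity is simple and to identify its residue. Here I would exploit the a priori bound $0 \leq f_n(R) \leq 1$: a pole of order $k \geq 2$ at $t = 1$ would, by partial-fraction expansion, produce coefficients growing like $n^{k-1}$, contradicting $f_n \leq 1$; the same bound rules out any singularity of $g_R$ with $|t| < 1$. Hence I can write
\[
g_R(t) = \frac{\alpha}{1-t} + h(t),
\]
where $h$ collects the remaining (simple) poles on the unit circle and the poles of strictly larger modulus. A standard Cesàro-averaging argument applied to this decomposition then gives $\tfrac{1}{N}\sum_{n \leq N} f_n(R) \to \alpha$; since $\mu_0(R)$ is by definition this Cesàro density and $\alpha = -\mathrm{Res}_{t=1} g_R(t)$, formula \eqref{mu0} follows. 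The two alternatives in the statement are then simply rephrasings of the two cases of the dichotomy in terms of the pole structure just established.
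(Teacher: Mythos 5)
Your proposal is essentially correct, but it is doing considerably more than the paper does: the paper offers no proof of this theorem at all, merely a footnote delegating rationality to \cite{flajolet} (or to Algorithm I) and the asymptotic alternative to the classification in \cite{multiplicative,af_semr}; moreover, in the paper formula (\ref{mu0}) is a \emph{definition} of $\mu_0(R)$ rather than an identity to be established. What your argument buys is a self-contained justification that also explains why $\mu_0$ deserves the name ``Cesaro density'': the transfer-matrix computation $r_n = u^T M^n v$ gives rationality exactly as in Algorithm I, Pringsheim plus the bound $0\le f_n\le 1$ localises the singularity, and the Cesaro-averaging step identifies $-\mathrm{Res}_{1}g_R(t)$ with the limit of $\frac{1}{N}\sum_{n\le N}f_n(R)$, which the paper simply takes for granted from the earlier literature.

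Two steps are stated a bit too quickly. First, the exclusion of a pole of order $k\ge 2$ at $t=1$ via ``coefficients growing like $n^{k-1}$'' ignores possible cancellation against other poles of the same order elsewhere on the unit circle; the clean fix is to compare along the real axis, where $g_R(t)=\sum f_n t^n\le \sum t^n = (1-t)^{-1}$ for $0<t<1$ while a pole of order $k$ at $1$ forces $g_R(t)\sim c(1-t)^{-k}$ as $t\to 1^-$, whence $k\le 1$. Second, your decomposition $g_R(t)=\alpha/(1-t)+h(t)$ tacitly assumes all other unit-circle poles are simple; this is needed for the Cesaro average of $h$'s contribution to vanish (the Cesaro mean of $n\omega^n$ with $|\omega|=1$, $\omega\ne 1$ need not tend to $0$), and it does follow from boundedness of $f_n$ by the standard fact that a bounded exponential-polynomial sum $\sum_j p_j(n)\omega_j^n$ with distinct $|\omega_j|=1$ must have all $p_j$ constant — but this should be said. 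With these two points made explicit the argument is complete.
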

Recall that a regular set is called {\em thick} if the parameter $\mu_0(R)$ defined by formula (\ref{mu0}) is strictly positive. This parameter $\mu_0(R)$ is called {\em Cesaro density} of $R$.
We use often the following simple properties of the generating function: suppose $R_1$ and $R_2$ are regular subsets of $F$. Then
\begin{enumerate}\label{prop}
\item [(1.)] \label{exgen0} If $R=R_1 \cup R_2$, then the corresponding generating function can be computed as $g_R(t)=g_{R_1}(t)+ g_{R_2}(t) - g_{R_1\cap R_2}(t)$.
\item [(2.)] \label{exgen1} If $R=R_1\circ R_2$, then $g_R(t)=g_{R_1}(t)g^{\ast}_{R_2}(t)$.
\end{enumerate}

Now we describe the first algorithm for calculation of the generating function for an arbitrary regular subset of a finite rank free group $F$. This algorithm is previously known (see, for example, \cite{multiplicative}), although it was not directly formulated there.

{\bf{ Algorithm I computing the frequency generating function $g_R(t)$ for a regular set $R$.}}
Indeed, let $\mathcal{A} = (S, \S,\d,I,Z) $ be an automaton such that $|S| = n$ and let $A$ be  it's {\textrm{adjacency matrix}}, i.e. $n \times n$  matrix with entries $a_{ij}$ such that each $a_{ij}$ corresponds to the number of arrows from the state $i$ to the state $j$. Clearly, the number of different paths of length $k$ from $i$ to $j$ is equal to $(A^k)_{i,j}$. Denote by $R$ the subset of $F$ accepted by $\mathcal{A}$.
{\bf{Algorithm I:}}
\begin{itemize}
\item[1.] Given an automaton $\mathcal{A}$, compute the entries $a_{ij}$, $i,j = 1, \ldots, n$ of the adjacency matrix $A$.
\item[2.] Compute the entries $b_{ij}$ of the fundamental matrix $B =  tA (E-tA)^{-1}$ of $\mathcal{A}$, with the entries $b_{ij}$ from the ring of formal power series $R[[t]]$.
\item[3.] The generating function $g_R(t)$ is equal to $\sum\limits_{i \in I, j \in Z} b_{ij}$.
\end{itemize}

One of the disadvantages of the Algorithm I  is that step [2.] involves the matrix inversion, and it makes the algorithm hardly implementable with the size of automaton $n$ big enough.
However, computation of the generating function can be significantly simplified for a wide class of regular sets. In what follows we introduce this type of sets and describe their structure along with the improved algorithm for computation of $g(t)$.
Now, using Algorithm I and the properties above, we compute generating function for certain regular subsets of $F$. We also calculate the corresponding values of Cesaro density $\mu_0(R)$ (defined by formula (\ref{mu0})).

\begin{example}\label{ex} 
\begin{enumerate}
\item \label{exgen2} For a whole free group $F$ we have $g_F(t)= - \frac{1}{t-1}$ and $\mu_0(F) = 1$.
\item \label{exgen3} For a set $R= F^{\sharp} = F \setminus\{1\}$ we have $g_{R}(t)=\frac{- t}{t-1}$ while $\mu_0(F^{\sharp}) =  1$.
\item\label{exgen4} Let $R$ be a cone\footnote{We recall the notions of cones in \ref{sub:thickthin}}  $C(w)$ or $R=C[w]$, and let $|w| = r$. Then
$g_{R}(t)= \frac{1}{2m (2m-1)^{r-1}} \cdot \frac{-t^r}{t-1},$ and $$\mu_0(R) = \frac{1}{2m (2m-1)^{r-1}}.$$
\item\label{exgen5} If $R=F\setminus B_{r-1}$, then $g_R(t)= \frac{-t^r}{t-1},$ and $\mu_0(R) = 1$.
\item\label{exgen6} For a subgroup $H < F(X)$ of all words of even length direct calculations of frequency generating functions gives $g_H(t)=\frac{1}{1-t^2},$ and therefore $\mu_0(H) = \fracd{1}{2}$.
\end{enumerate}
\end{example}
%
%
%

\section{Special automata over free groups and monoids}\label{sec:specials}

In this section we investigate one of the central concepts of this paper, i.e. special automata over monoids and groups. We show in Proposition \ref{Decomp1} that every regular set in a free group can be decomposed into finite union of subsets accepted by special automata.

\subsection{Definitions}\label{sub:def}
Let $\A = (S, \S, \d, i_0, Z)$ be a deterministic automaton. $\A$ is called {\em special} over the monoid $\Ss$ if
\begin{itemize}
\item [a.]\label{sa} The initial vertex has no inedges;
\item [b.]\label{sb} There is only one final state $z_0 \in Z$;
\item [c.]\label{sc} $\A$ does not contain inaccessible states;
\item [d.]\label{sd} For every state $s \in S$  there is a direct path from $s$ to the final state $z_0$;
\item [e.]\label{se} For any state $s \in S$, all arrows which enter $s$ have the same label $x \in \S$ (we shall say, $s$ {\em has type $x$}).
\end{itemize}
In order to adjust the notion of speciality to groups, we impose an additional constraint on automata. Namely, let $F$ be the free group, and $\A$ be a special automaton. Suppose also that
 \begin{enumerate}
 \item[f.] \label{sf} For any state $s$ of type $x$ in $\A$, all arrows exiting from $s$ cannot have label $x^{-1}$.
\end{enumerate}
  $\A$ is a {\em special} automaton over the group $F$, if it satisfies the conditions (a)--(f).

In what follows we also shall use a notion of a special monoid. Namely, a monoid $M$ is called {\em special} if it is accepted by a finite automata $\A$ with $i_0=z_0$, satisfying conditions (b) -- (f).

\subsection{Decomposition into special automata}\label{sub:decomp}
\begin{proposition}\label{Decomp1} Let $L$ be a regular language in $F$. Then there exist a finite number of automata $\A_0, \ldots, \A_k$ such that
\begin{itemize}
\item $L$ is a disjoint union of languages $L_0= L(\A_0), \ldots, L_k = L(\A_k)$ in $F$: $L=L_0 \sqcup L_1 \sqcup \ldots \sqcup L_k$;
\item every $L_i$ is either accepted by a special automaton or a special monoid.
\end{itemize}
\end{proposition}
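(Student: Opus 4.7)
The plan is to start from a deterministic finite automaton $\A=(S,\S,\d,i_0,Z)$ recognising $L$ and refine it by a sequence of standard procedures until every final state yields a piece recognised by a special automaton or a special monoid.

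First, I would perform a state-splitting construction: for every $s\in S$, replace $s$ by copies $s^x$ indexed by the labels $x\in\S$ of incoming edges of $s$, redirect each incoming edge of label $x$ into $s^x$, and duplicate each outgoing edge $s\to t$ labelled $y$ as $s^x\to t^y$. In addition, introduce a fresh vertex $i_0^\circ$ with no incoming edges whose outgoing edges copy those of $i_0$, and declare $i_0^\circ$ the new initial state (final exactly when $i_0\in Z$). The resulting automaton $\A'$ remains deterministic, accepts $L$, and already satisfies (a) at $i_0^\circ$ and (e) at every other vertex.

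Next, because $L\subseteq F$ consists of reduced words and, by (e), every path ending at a vertex of type $x$ spells a word terminating with $x$, any outgoing arrow labelled $x^{-1}$ from such a vertex would force a non-reduced spelling and hence cannot appear on an accepting path; I delete all such arrows to enforce (f), then trim (remove inaccessible and dead vertices) to enforce (c) and (d), all without affecting the language or the conditions previously established. At this stage $\A'$ satisfies (a), (c), (d), (e), (f) but may still have several final states.

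Finally, since $\A'$ is deterministic, its language partitions as $L=\bigsqcup_{z\in Z'} L_z$, where $L_z$ is the language of the automaton obtained from $\A'$ by retaining only $z$ as final state and re-trimming. For $z\neq i_0^\circ$, the resulting automaton has $i_0^\circ$ as inedge-free initial vertex and $z$ as unique final vertex, hence satisfies (a)--(f) and is a special automaton. For $z=i_0^\circ$ (possible only if $i_0\in Z$), the re-trim leaves only the isolated vertex $i_0^\circ$, so $L_z=\{\varepsilon\}$ is accepted by the one-vertex automaton with $i_0=z_0$, a (trivial) special monoid.

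The main difficulty is the bookkeeping inside the state-splitting step: I must simultaneously preserve determinism (using that the outgoing labels at $s^x$ inherit distinctness from those at $s$), verify by induction on path length the bijection between accepting runs of $\A$ and of $\A'$ (so $L(\A')=L$), and check that the subsequent removal of reducing arrows is lossless; the last point reduces precisely to the observation that any accepting run of $\A'$ spells a reduced word and so cannot traverse an edge that would create a cancellation.
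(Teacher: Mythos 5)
Your proposal is correct and follows essentially the same route as the paper: arrange an inedge-free initial state, split each state according to its incoming label so that (e) holds, use the fact that accepted strings are reduced words to discard (or, in the paper's version, never create) the cancelling arrows forbidden by (f), and then peel off one final state at a time, with determinism giving disjointness. The only cosmetic difference is that the paper obtains the single inedge-free initial state via the Rabin--Scott powerset construction applied to an NFA, whereas you start from a DFA and adjoin a fresh copy of the initial state by hand.
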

\begin{proof} Since $L$ is regular in $F$, it is accepted by a finite automaton $\A$. Although we can assume that $\A$ is deterministic automaton, it will be more convenient for us to start with a non-deterministic one, which accepts $L$ as a language of reduced words, satisfies (c) (it is always possible, see \cite{Gil}), but, probably, has $\epsilon-$transitions and more than one initial state.
Therefore, $\A$ has a form $\A = (S, \S\cup \epsilon, \d, I, Z)$. We begin with an application of Rabin-Scott powerset construction (see \cite{RS} for details).
As an output of this procedure, we obtain an automaton $\A' = (S', \S, \d', i_0, Z')$, which does not have $\epsilon$-transitions and has only one initial state $i_0$ without inedges, as required. As a by-product of the construction, we have conditions (c) and (d) satisfied. Further, because we have started from the automaton $\A$ which does not have consecutive $x, x^{-1} (x \in \S)$ transitions, $\A'$ does not have these transitions as well. Nevertheless, it might happen that $\A'$ has more than one final state and some states of $S'$ have incoming edges with different labels.
In the latter case we split the states of $\A'$ as it is shown on figure \ref{RIS.Spl}:
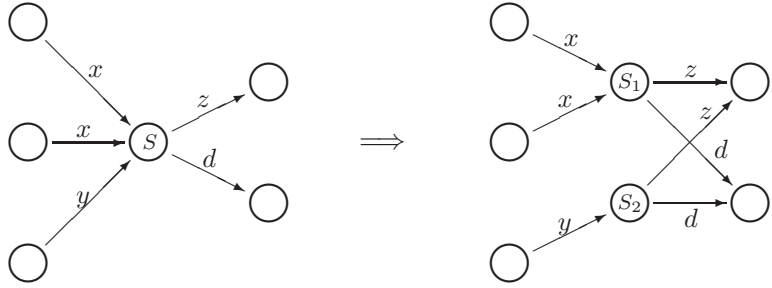
\begin{figure}[h]
 \setlength{\unitlength}{0.8mm}
\begin{picture}(100,70)(-15,-10)

\thicklines \put(55,19){$\Longrightarrow$}

\put(0,0){\circle{6}} \put(0,20){\circle{6}}
\put(0,40){\circle{6}} \put(20,20){\circle{6}}
\put(40,10){\circle{6}} \put(40,30){\circle{6}}

\put(80,0){\circle{6}} \put(80,20){\circle{6}}
\put(80,40){\circle{6}} \put(100,10){\circle{6}}
\put(100,30){\circle{6}} \put(120,10){\circle{6}}
\put(120,30){\circle{6}}

\thinlines \put(3,3){\vector(1,1){14}}
\put(4,20){\vector(1,0){12}} \put(3,37){\vector(1,-1){14}}
\put(24,18){\vector(2,-1){12}} \put(24,22){\vector(2,1){12}}

\put(84,2){\vector(2,1){12}}
\put(84,22){\vector(2,1){12}}
\put(84,38){\vector(2,-1){12}}

\put(104,10){\vector(1,0){12}}
\put(103,13){\vector(1,1){14}}
\put(104,30){\vector(1,0){12}}
\put(103,27){\vector(1,-1){14}}

\put(19,19){\scriptsize {$S$}} \put(98,9){\scriptsize {$S_2$}}
\put(98,29){\scriptsize {$S_1$}}

\put(8,10){\small {$y$}} \put(8,21){\small {$x$}}
\put(10,31){\small {$x$}} \put(29,16){\small {$d$}}
\put(28,26){\small {$z$}}

\put(88,5.5){\small {$y$}} \put(88,26){\small {$x$}}
\put(89,36){\small {$x$}} \put(109,6){\small {$d$}}
\put(114,17.5){\small {$d$}} \put(111.5,24){\small {$z$}}
\put(109,31){\small {$z$}}

\end{picture}

\caption{{\small \sl Splitting the states of the automaton $\A'$.}} \label{RIS.Spl}
\end{figure}

The output of the splitting procedure we shall call $\A''=(S'', \S, \d'', i_0, Z'')$. If $\A''$ has only one final state $z_0 \neq i_0$, then it is  special over $F$. If $Z'' = \{z_0 \}$ and $i_0 = z_0$, then $L(\A'')$ is a special monoid by definition and due to (f).
Suppose now $Z'' = \{ z_0, \ldots, z_k \}$, with $k\ge 1$. For every $z_i \in Z''$ consider the maximal connected subgraph $\A_i = (S_i, \S, \d_i, i_0, z_i)$ of $\A''$ such that $S_i \subset S''$ and $\d_i \subset \d''$ induced by the paths of arrows from $i_0$ to $z_i$; obviously, there are two options for $\A_i$: either $\A_i$ has distinct initial and finale state and therefore $A_i$ is special, or initial and final states conincide and so $L(\A_i)$ is a monoid . Since $L= L(\A'')$ and $\A''$ satisfies (c), (d), clearly, $L = L(\A_0)\cup L(\A_2)\cup\ldots\cup L(\A_k)$. Moreover, this union is disjoint since $L_i \cap L_j \neq \emptyset$ implies existence of paths of arrows $p_1, p_2$ such that $p_1= i_0 v_1 \ldots v_s z_i \in \A''$ and $p_2 = i_0 u_1 \ldots u_r z_j \in \A''$ for $z_i \neq z_j$, with the label $\d''(p_1) = \d''(p_2)$, a contradiction with $\A''$ being deterministic.\end{proof}

\begin{remark}\label{remark} Notice that the number $k$ and subsets $L_i$ for different decompositions of $L$ can vary. On  the other hand, suppose $L = L_0 \sqcup L_2 \sqcup \ldots \sqcup L_k$ and $L= M_0 \sqcup M_2 \sqcup \ldots \sqcup M_s$ are different decomposition of a regular set $L$ as in Proposition \ref{Decomp1}. Then by property (1.) of the generating functions we have $g_{L_0}(t)+\ldots + g_{L_k}(t) = g_L(t) = g_{M_0}(t)+\ldots + g_{M_s}(t)$.
\end{remark}

\subsection{Further splitting of subsets in free groups}\label{sub:splitting}
 A special automaton satisfying (a)--(f) in turn admits further splitting:
\begin{lemma}\label{lenondegen} Let $R = R(\A)$ and $\A = (S, \S, \d, i_0, z_0) $ be a special automaton over $F$. Then there exist regular languages $R_1, R_2, R_3 \subset F$ such that $R_j$ are accepted by $\A_j=(S, \S_j, \d_j, i_j, z_j)$, $\A_1$ is special over $F$ and
\begin{itemize}
\item[1.]if $\A$ has at least one arrow exiting $z_0$, then $R_2$ is non-empty and $i_2 = z_2$, while $i_3 \neq z_3$ and
\begin{equation}\label{RR1R2}
R = R_1 \circ R_2 {\textrm{ is unambiguos}};
\end{equation}
\begin{equation}\label{R2R3}
R_2 = 1 \sqcup R_3 \sqcup (R_3 \circ R_3) \sqcup (R_3 \circ R_3 \circ R_3) \sqcup \cdots;
\end{equation}
\begin{equation}\label{gen}
g_R(t) = g_{R_1}(t)g^*_{R_2}(t); \,\,\,\, \lambda(R) = \lambda(R_1)\lambda^*(R_2).
\end{equation}
\item[2.] if there is no arrows exiting $z_0$, then $R_2 = R_3 = \emptyset$, $R=R_1$, $\lambda(R) = \lambda(R_1)$, and $g_R(t) = g_{R_1}(t)$.
\end{itemize}
\end{lemma}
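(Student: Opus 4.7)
My overall plan is to pivot at the unique final state $z_0$ and split every accepting path in $\A$ at its first visit to $z_0$. In the non-degenerate case (1), where $z_0$ has an outgoing arrow, this produces a prefix piece — a path from $i_0$ to $z_0$ that never revisits $z_0$ — followed by a (possibly trivial) sequence of loops at $z_0$. The prefix pieces will form $R_1$, the loop monoid will be $R_2$, and the primitive first-return loops at $z_0$ will be $R_3$. Case (2) is immediate: if $z_0$ is a sink then no nontrivial decomposition is possible and one simply sets $R_1=R$, $R_2=R_3=\emptyset$.

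Concretely I would build three automata from $\A$. For $\A_1$ I delete all arrows exiting $z_0$ and prune states that lose accessibility from $i_0$ or forward reachability to $z_0$; the resulting automaton inherits (a)--(f) from $\A$ since the types of surviving states are unchanged and no new edges are created, and its language is exactly the set of labels reaching $z_0$ for the first time. For $\A_2$ I take the subautomaton induced by all states reachable from $z_0$ and set $i_2=z_2=z_0$; conditions (b)--(f) are inherited, so $\A_2$ is a special monoid whose language $R_2$ consists of labels of loops based at $z_0$, and its nontriviality follows from the existence of an outgoing arrow at $z_0$ combined with condition (d). For $\A_3$ I further split $z_0$ into a source $i_3$ carrying the outgoing arrows and a sink $z_3$ carrying the incoming arrows, keeping the rest of $\A_2$ intact; this makes $\A_3$ a special automaton with $i_3\ne z_3$ whose language is precisely the set of first-return loops at $z_0$ in $\A$.

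Finally I would establish the three equalities. By determinism each $w\in R$ is read along a unique accepting path; cutting at the first visit to $z_0$ yields a unique factorisation $w=w_1 w_2$ with $w_1\in R_1$, $w_2\in R_2$, and any such concatenation is in turn accepted by $\A$. The crucial observation is that no free-group cancellation occurs at the join: $z_0$ has a well-defined type $x$ by (e), and (f) forbids any outgoing arrow from $z_0$ labeled $x^{-1}$, so $w_1 w_2$ is already reduced. This proves (\ref{RR1R2}), and the same first-return splitting applied iteratively inside $R_2$ (cut each loop at each intermediate visit to $z_0$) yields the unambiguous decomposition (\ref{R2R3}). The generating-function identity in (\ref{gen}) is then an immediate application of property (2.) of Section \ref{sub:gen_alg1}, and the $\lambda$-measure identity follows from the same unambiguity together with the definition of $\lambda^*$.

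The main technical subtlety I anticipate is precisely the reducedness at each join: one must be sure that the constructions above produce \emph{unambiguous} concatenations in $F$, not merely in the free monoid $\Ss$. Condition (f) of a special automaton is built to handle exactly this, and once it is checked at the $R_1/R_2$ boundary and at every consecutive $R_3/R_3$ boundary inside $R_2$, the rest of the argument is routine bookkeeping.
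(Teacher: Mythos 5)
Your proposal is correct and follows essentially the same route as the paper: you build $\A_1$ by deleting the arrows leaving $z_0$, $\A_2$ as the subautomaton based at $z_0$, and $\A_3$ by splitting $z_0$ into a source and a sink, then use determinism together with condition (f) to get reducedness and unambiguity of $R=R_1\circ R_2$. Your first-visit-to-$z_0$ argument for uniqueness of the factorisation is just a rephrasing of the paper's argument that a word $u_2^{-1}u_1$ witnessing two factorisations would have to be read as a loop at $z_0$, contradicting the construction of $\A_1$.
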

\begin{proof} Although the construction of sets $R_1, R_2, R_3$ and their $\lambda-$measures appears in \cite{multiplicative} and \cite{fmrII}, we shall widely use these sets and automata in what follows, and therefore we repeat briefly the necessary computations (see also Example \ref{exnondeg} and its illustrations in figures \ref{fignondeg}, \ref{findeg1}, \ref{figdeg2}, \ref{figdeg3}).

Suppose that the final state of $\A$ does not have exiting arrows. Then we leave $\A$ as it is, and, clearly, [2.] holds.

Let now $z_0$ has at least one exiting arrow. In this case the special automaton $\A_1$ accepting $R_1$ can be obtained from $\A$ by removing all arrows exiting from $z_0$; we take $i_1 = i_0$ and $z_1  = z_0$. Let us consider the automaton $\A_2$ accepting $R_2 \neq \emptyset$  formed by all states accessible from the state $z_0$, with the same arrows between them as in $\A$; we take $z_0$ for the both $i_2$ and $z_2$. If now $u \in R_1$ and $v \in R_2$, then the word $uv$ is reduced and $\lambda(uv) = \lambda(u)\lambda^*(v).$ Therefore, the presentation of $R$ in the form $R = R_1 \circ R_2$ is unambiguous. Indeed, let $w \in R$ can be written in two different forms as $u_1 \circ v_1$ and $u_2\circ v_2$, where $u_1, u_2 \in R_1$ and $v_1,v_2 \in R_2$. Assume that $|u_1| > |u_2|$ (otherwise consider the pair $v_1$ and $v_2$), and let $h  = u_2^{-1} u_1 \in F$ be readable in $\A$. Notice that $h$ starts at $z_0$ since $u_2$ is accepted by $\A_1$ and ends at $z_0$ because $\A_1$ accepts $u_1$. Therefore, $h$ is accepted by $\A_2$, a contradiction with the construction of $\A_1$. The estimates on $\lambda(R)$ and $g_R(t)$ now follow immediately from the construction (frequencies assigned to arrows in $\A_1$, $\A_2$ the
same as they were in $\A$) and formula (2.).

Further, we transform the automaton $\A_2$ by splitting the final state $z_2 = z_0$ into separate initial state $i_3$ (with no arrows entering it, and those arrows which were exiting $z_2$ now exiting $i_3$), and the final state $z_3$ (with no arrows exiting
$z_3$, and those arrows which were entering $z_0$ now entering $z_3$). Then, clearly, (\ref{R2R3}) holds.
%

\end{proof}
\begin{corollary}\label{comonoid}  Let $R = R(\A)$ and $\A  = (S, \S, \d, i_0, z_0) $ be a special automaton over $F$, and let $R_1, R_2, R_3 \subset F$ be regular languages such that $R_1$ is accepted by a special automaton over $F$, $R_2$ is non-empty set such that its initial and the final state coincide, $R_3$ is accepted by a special automaton over $F$; $R = R_1 \circ R_2$,  and $R_2$ satisfies (\ref{R2R3}) as in lemma above. Then the subset $R_2$ of $F$ is the free special monoid generated by $\{w_i | i \in I \}$, where $w_i \in F$ are words in $R_3$ and $w_i$ can be computed effectively by $\A$.
\end{corollary}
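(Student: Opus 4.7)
The plan is to establish three claims in sequence. First, that $R_2$ is a special monoid in the sense of the paper, so the phrase ``free special monoid'' is meaningful for it. Second, that the set $W = R_3$ freely generates $R_2$ as an abstract monoid. Third, that the elements of $W$ can be extracted algorithmically from $\A$.

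For the first claim, I would reuse the automaton $\A_2 = (S_2, \S, \d_2, z_0, z_0)$ constructed in the proof of Lemma \ref{lenondegen}. Conditions (b)--(f) in the definition of a special monoid are inherited directly from $\A$: condition (b) is immediate since $z_0$ is the unique final state; (c) and (d) hold because $\A_2$ is, by construction, the sub-automaton induced by the states mutually accessible with $z_0$; conditions (e) and (f) hold because the labels of arrows of $\A_2$ agree with those they carried in $\A$. Hence $\A_2$ witnesses $R_2$ as a special monoid.

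For the second claim, set $W = R_3$ and prove the unique factorization property: every nontrivial $w \in R_2$ equals $w_{i_1} \circ w_{i_2} \circ \cdots \circ w_{i_n}$ for a unique sequence $w_{i_j} \in R_3$. Since $\A_2$ is deterministic and $i_2 = z_2 = z_0$, reading $w$ in $\A_2$ starting at $z_0$ traces a unique closed path $\pi$. Let $0 = t_0 < t_1 < \cdots < t_n = |w|$ be the successive times at which $\pi$ revisits $z_0$. The construction of $\A_3$ in Lemma \ref{lenondegen} identifies words in $R_3$ with those paths in $\A_2$ that travel from $z_0$ to $z_0$ without visiting $z_0$ in between; thus each segment of $\pi$ between $t_{j-1}$ and $t_j$ reads precisely a word $w_{i_j} \in R_3$. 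Condition (f) applied at $z_0$ guarantees that the concatenation of any two elements of $R_3$ is already reduced in $F$, so the product in $F$ coincides with concatenation and no cancellation can disturb the decomposition. Uniqueness is then forced by the determinism of $\A_2$ together with the fact that the visit-times $t_j$ are determined by $w$ alone.

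For effectiveness, the automaton $\A_3$ is produced by an explicit surgery on $\A$ already described in Lemma \ref{lenondegen}, so $R_3$ is given as a concrete regular language; its elements can be enumerated by breadth-first search in $\A_3$ for accepting paths from $i_3$ to $z_3$. The delicate step in the argument is the unique factorization, but once the visit-times $t_j$ are used to read off the factors, it reduces to determinism of $\A_2$ and condition (f); I do not anticipate a genuine obstacle beyond careful bookkeeping.
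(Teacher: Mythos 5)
Your proposal is correct and follows essentially the same route as the paper's own proof: both rely on condition (f) at the state $z_0$ to guarantee that concatenation of words accepted by $\A_2$ is already reduced in $F$, and both use the splitting (\ref{R2R3}) (your ``first-return to $z_0$'' segmentation is exactly that decomposition) to exhibit $R_3$ as a free generating set, with effectiveness coming from the explicit surgery producing $\A_3$. The only difference is one of detail: the paper asserts freeness by appealing to the disjointness in (\ref{R2R3}), whereas you spell out the unique-factorization argument via determinism of $\A_2$ and the visit-times, which is a welcome elaboration rather than a new idea.
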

\begin{proof} The automaton $\A_2$ constructed in the proof of Lemma \ref{lenondegen} has $i_2 = z_2$, and its final vertex $z_2$ is of $x-$type, for some $x \in \S$. The condition (f) provided by the speciality of $\A$ guarantees that the arrow labelled $x^{-1}$ cannot exit from $i_2$. Therefore, if $u_1, u_2$ are accepted by $\A_2$, then $u_1 u_2 = u_1 \circ u_2$. In particular, using the further splitting of $R_2$, one can express every $u_i$ as a reduced product of $w_i$'s accepted by (non-empty) $R_3$. Since the identity belongs to $R_2$, $\A_2$ accepts the free special monoid with generators $w_i, i \in I$.
\end{proof}

The subsets and automata described in Lemma \ref{lenondegen}, claim 1. are of particular interest for us.  Regular sets $R \subseteq F$ of such form we shall call {\em saturated}.  Sets $R_1, R_2$, and $R_3$ in the splitting defined in this lemma we shall call a {\em set of first, second}, and {\em third type}, respectively.
In what follows, we use the notations $\A_1, \A_2, \A_3$ for the splitting of arbitrary automaton $\A$ and $R_1, R_2, R_3$ for the corresponding regular sets exclusively in a sense of Lemma \ref{lenondegen}. We provide an example of such automata and sets below.

\begin{example}\label{exnondeg} Let $\S$ be an alphabet $x, y, t, z$ and the inversion is given by the rule $t \rightarrow x^{-1}, z \rightarrow y^{-1}$ (so $X = \{x, y\}$).
Consider the special automaton $\A$ (the arrow with a tale corresponds to the initial state, and the finale state is drawn as a double circle).

\begin{figure}[ht!]
\begin{center}

\psfrag{i}{$i_0$}
\psfrag{s1}{$s_1$}
\psfrag{s2}{$s_2$}
\psfrag{z}{$z_0$}
\psfrag{x}{$x$}
\psfrag{y}{$y$}
\psfrag{t}{$t$}

\includegraphics[width=6cm]{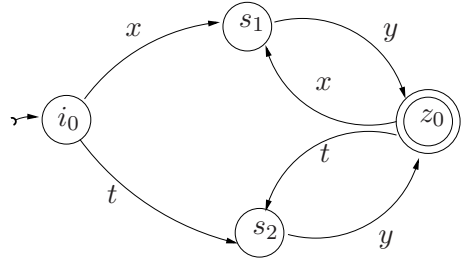}\\
\caption{The special automaton $\A$.}\label{fignondeg}
\end{center}
\end{figure}

Clearly, $R = R(\A)$ is generated by the following regular expression:
$$R = x y \left( (x^{-1} y )^{\ast} (x y )^{\ast}\right)^{\ast} \cup x y \left( (x y )^{\ast}(x^{-1} y )^{\ast} \right)^{\ast} \bigsqcup$$
$$ \bigsqcup x^{-1} y \left( (x y )^{\ast} (x^{-1} y )^{\ast}\right)^{\ast} \cup x^{-1} y \left( (x^{-1} y )^{\ast}(x y )^{\ast} \right)^{\ast}.$$

The set of first type $R_1$ can be read off by the automaton $\A_1$ shown in figure \ref{findeg1}; therefore, $R_1 = xy \cup x^{-1}y$.

\begin{figure}[ht!]
\begin{center}

\psfrag{i}{$i_0$}
\psfrag{i1}{$i_0$}
\psfrag{s1}{$s_1$}
\psfrag{s2}{$s_2$}
\psfrag{z}{$z_0$}
\psfrag{z1}{$z_0$}
\psfrag{x}{$x$}
\psfrag{y}{$y$}
\psfrag{t}{$t$}

\includegraphics[scale=.35]{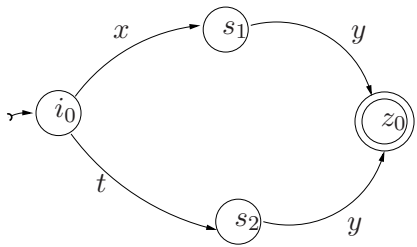}\\
\caption{The special automaton $\A_1$.}\label{findeg1}
\end{center}
\end{figure}

The sets of second type $R_2=\left((xy)^{\ast}(x^{-1}y)^{\ast} \right)^{\ast}$ and third type $R_3 = xy \cup x^{-1}y$  with their automata $\A_2$ and $\A_3$. Clearly, the elements $w_1 = xy, w_2 = x^{-1}y$ provides a set of generators for the monoid $R_2$ (see Corollary \ref{comonoid}).

\begin{figure}[ht!]
\begin{center}

\psfrag{i}{$i_0$}

\psfrag{z}{$z_0$}

\psfrag{i1}{$i_1$}

\psfrag{z1}{$z_1$}

\psfrag{s1}{$s_1$}

\psfrag{s2}{$s_2$}

\psfrag{x}{$x$}

\psfrag{y}{$y$}

\psfrag{t}{$t$}

\subfigure[$\A_2$]{
\includegraphics[scale=.35]{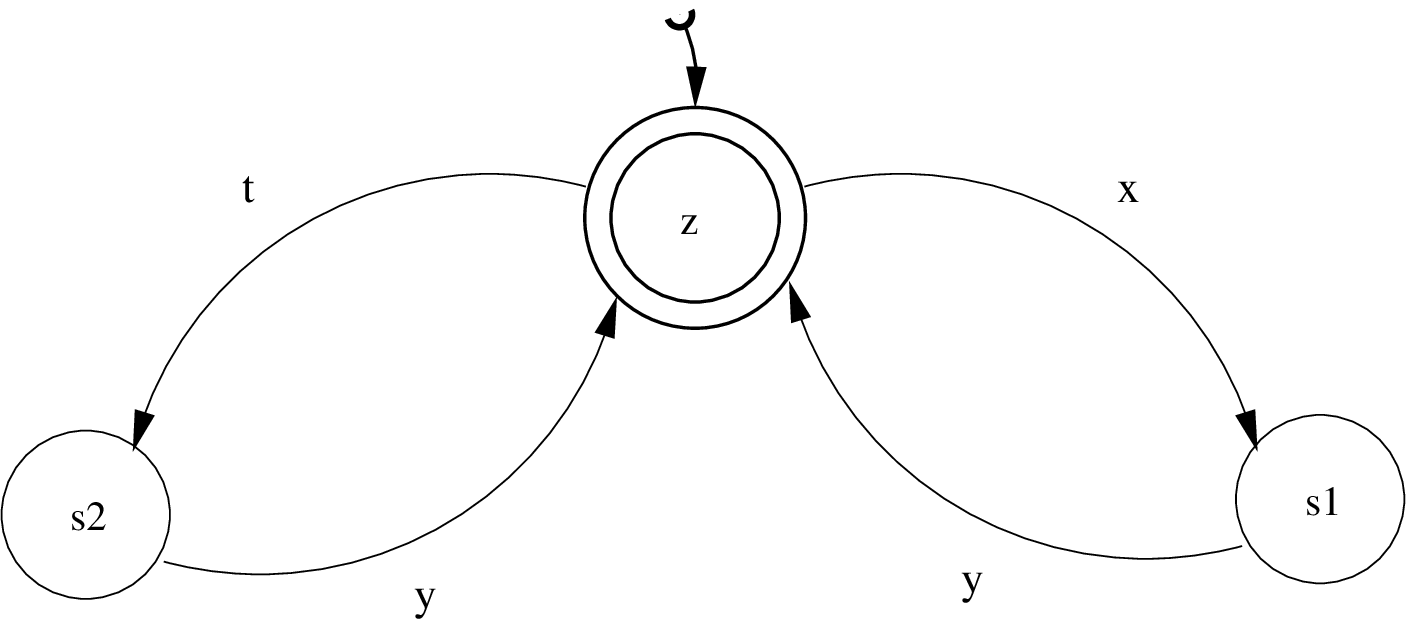}\label{figdeg2}}  \hspace{9mm}
\subfigure[$\A_3$]{
\includegraphics[scale=.35]{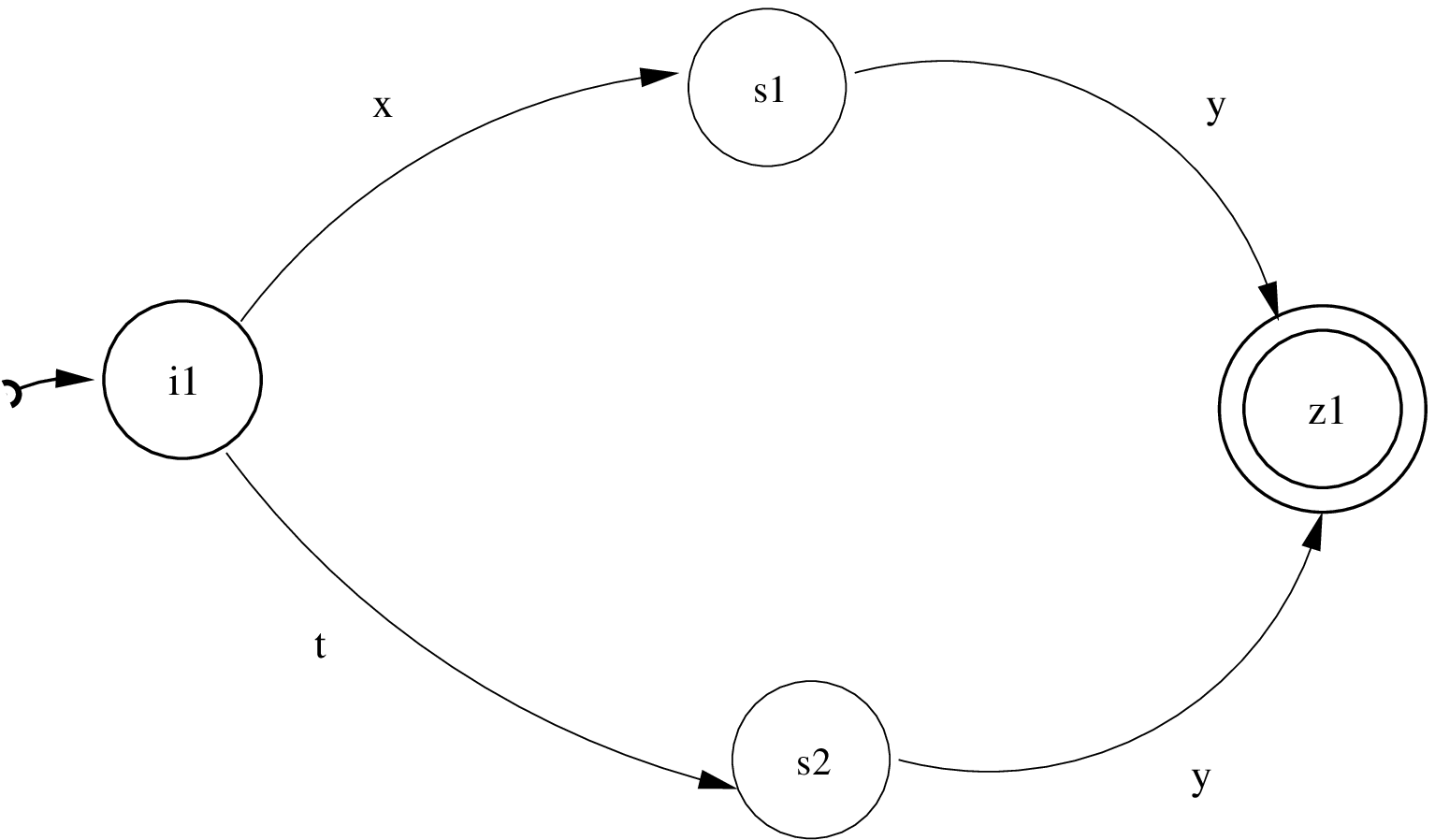}\label{figdeg3}}  \hspace{9mm}
\end{center}
\caption{The automata $\A_2, \A_3$.}\label{fig}
\end{figure}
%
%
\end{example}


\subsection{Thick semigroups and cones}\label{sub:thickthin}
We want to classify the subsets accepted by special automata over $F$ by modulo of their measure. This classification requires recalling of the notion of $X-$complete automaton, which was already used in  \cite{multiplicative} and \cite{fmrI} for analogous purposes.
Let $\A = (S, \S, \d, i_0, z_0)$ be a special automaton satisfying the conditions (a)--(f). $\A$ is called $\S-${\em complete} if for every state $s \in S \smallsetminus \{i_0\}$ of type $x \in \S$ every label from $\S \smallsetminus \{x^{-1}\}$ is present on one of the arrows exiting from $s$ and exactly $|\S|$ arrows exits from  $i_0$. 
Further, let $R_2$ be a regular set of the second type and $\A_2 = (S, \S, \d, z_0, z_0)$ be the corresponding automaton.
$\A_2$ is called $\S-${\em complete} if for every state $s \in S$ of type $x$ all arrows labeled by $\S \smallsetminus \{x^{-1}\}$ exit from $s$.
Otherwise $\A$ (or $\A_2$) is {\em not $\S-$complete} (for instance, the automaton $\A$ in Example, as well as $\A_1, \A_2$, and $\A_3$ are not $\S-$complete). The following proposition shows that the $\lambda-$ measure can be easily estimated in the latter case.
\begin{proposition}\label{notxcomplete} Let $\A$ be a special automaton satisfying the conditions (a)--(f), and $R=L(\A)$ be a saturated set such that $R = R_1 \circ R_2$ is the splitting of the form (\ref{RR1R2}), with $R_1 = L(\A_1)$ and $R_2=L(\A_2)$. If $\A_2$ is not $\S$-complete, then $R$ is exponentially $\lambda-$ measurable.
\end{proposition}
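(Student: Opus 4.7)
The plan is to use the factorization $R = R_1 \circ R_2$ from Lemma~\ref{lenondegen}, which yields $g_R(t) = g_{R_1}(t)\, g^*_{R_2}(t)$, and to show that each factor is analytic at $t=1$ by a spectral analysis of the adjacency matrices involved. By Theorem~\ref{observ_g}, a regular generating function with no pole at $t=1$ is exactly that of an exponentially $\lambda$-measurable set, so this will deliver the claim.

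First I would handle $g^*_{R_2}$. The automaton $\A_2$ is strongly connected: every state lies on an out-path from $z_0$ by construction, and any simple path provided by (d) from such a state back to $z_0$ remains inside $S_2$ (each of its intermediate states is reachable from $z_0$ as well). The hypothesis that $\A_2$ is not $\S$-complete means some state of $\A_2$ has strictly fewer than $2m-1$ outgoing arrows in $\A_2$, so the corresponding row of the adjacency matrix $A_2$ has sum $<2m-1$, while all other rows sum to at most $2m-1$ by (f). The row sums are therefore not all equal, and the Perron--Frobenius theorem for irreducible nonnegative matrices forces $\rho(A_2) < 2m-1$. Since $|R_2 \cap S_j| = (A_2^j)_{z_0,z_0}$ is at most a polynomial in $j$ times $\rho(A_2)^j$, the frequencies $f_j(R_2)$ decay geometrically like $(\rho(A_2)/(2m-1))^j$, and $g^*_{R_2}$ is analytic at $t=1$.

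Next, I would show $\rho(A) < 2m-1$; since $A_1 \le A$ entrywise this also gives $\rho(A_1) < 2m-1$. Decompose $\A$ into strongly connected components: $\{i_0\}$ (trivial, spectral radius $0$ by (a)), $\A_2$ (handled above), and ``intermediate'' SCCs $C_1,\ldots,C_r$ not containing $z_0$. For each $C_i$, condition (d) supplies a path from every state of $C_i$ to $z_0 \notin C_i$, and such a path must exit $C_i$ via some arrow $c' \to c''$ with $c' \in C_i$, $c'' \notin C_i$. Since (f) bounds the total out-degree of $c'$ by $2m-1$, the row of $c'$ in the $C_i$-restricted adjacency matrix sums to at most $2m-2$, while other rows sum to at most $2m-1$. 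Perron--Frobenius applied to the irreducible block $A_{C_i}$ again gives $\rho(C_i) < 2m-1$. Hence $\rho(A) = \max_i \rho(C_i) < 2m-1$, and the same spectral argument shows $g_{R_1}$ is analytic at $t=1$.

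Combining, $g_R = g_{R_1}\, g^*_{R_2}$ is analytic at $t=1$, so by Theorem~\ref{observ_g} the set $R$ is exponentially $\lambda$-measurable. The main obstacle is the intermediate-SCC step: one must translate condition (d) into the structural fact that every intermediate SCC necessarily ``leaks'' at least one arrow, for only this leak produces the strict Perron--Frobenius gap needed to exclude a hidden pole of $g_{R_1}$ at $t=1$ that the hypothesis on $\A_2$ alone would not rule out.
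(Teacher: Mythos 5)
Your argument is correct, but it takes a genuinely different route from the paper. The paper's proof is essentially a reduction plus citation: after observing that one may assume it is precisely $\A_2$ that fails to be $\S$-complete, it invokes Theorem 3.4 of the Borovik--Myasnikov--Remeslennikov paper to get $\lambda$-measurability of $R$, and then the asymptotic classification of regular sets to upgrade this to exponential $\lambda$-measurability. You instead give a self-contained spectral proof: the non-completeness hypothesis gives a deficient row in the (irreducible, since $\A_2$ is strongly connected by construction and condition (d)) block $A_2$, condition (d) forces every intermediate strongly connected component to leak an arrow and hence also have a deficient row, and Perron--Frobenius plus the block-triangular structure of the condensation yield $\rho(A)<2m-1$, whence both $g_{R_1}$ and $g^{*}_{R_2}$ converge on a disk of radius greater than $1$ and Theorem \ref{observ_g} finishes the job. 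Your key structural step --- that (d) forces every SCC not containing $z_0$ to have out-degree deficit, which is exactly what rules out a hidden pole of $g_{R_1}$ --- is sound, and in fact your estimate $f_k(R)\le Ck^{n}(\rho(A)/(2m-1))^{k}$ already gives exponential $\lambda$-measurability directly, so the final appeal to Theorem \ref{observ_g} (whose proof rests on the same external classification the paper cites) could be dispensed with, making your proof strictly more self-contained than the paper's. The only cosmetic points: the bound of $2m-1$ on out-degrees uses determinism together with (f), not (f) alone, and the maximum in $\rho(A)=\max\rho(C_i)$ should be understood to range over all diagonal blocks including $\{i_0\}$ and $S_2$; neither affects the argument.
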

\begin{proof} If $\A$ is $\S-$complete, then only one of $\A_1, \A_2$ can be $\S-$complete since $R_1 \circ R_2$ is unambiguos by Lemma \ref{lenondegen}. Moreover, if $\A_2$ is $\S-$complete for the corresponding $\S-$complete automaton $\A$, then $\A_1$ is not $\S-$complete at the state $z_0$. Thus, we can assume that precisely $\A_2$ is not $\S-$complete. Then $R = L(\A)$ is $\lambda-$measurable by Theorem 3.4 in \cite{multiplicative}. Since $R$ is regular, it is exponentially $\lambda-$measurable by asymptotic classification of regular sets (\cite{multiplicative,af_semr}).
\end{proof}
 If, on the other hand, $\A$ is $\S-$complete, then we can improve some previously known results on  classification of regular subsets in free groups.
Namely, let $R_2$ be a regular subset of $F$ of second type accepted by the automaton $\A_2$. According to Corollary \ref{comonoid}, $R_2= L(\A_2)$ forms a (special) monoid, and if $\A_2$ is $\S-$complete, we shall call $R_2$ {\em thick}. An interesting fact about thick monoids is that we can describe them in terms of double-based cones.
We recall that the {\em cone} $C(w)$ is the set of all elements in $F$ containing $w$ as initial subword. In what follows we also shall be interested in a symmetric notion of a cone with a right-hand side handle, i.e. the set of all words in $F$ that terminates with $w$ (we denote this sort of cones by $C[w]$). Another member of this family is the {\em double-based cone} with (nontrivial) handles $w_1, w_2$, consisting of all words in $F$ of the form $w_1 \circ f \circ w_2$, $f \in F$. Notice that all three types of cones are regular in $F$ (see, for example, Corollary 3.15 \cite{dc}). Let us consider the {\em generalized $x$-cone} $C(Y,x)$, $x \in \S$, i.e. the union of double-based cones of the form $\mathop{\sqcup}\limits_{y \in \S: y \neq x^{-1}}C(y,x)$.

The following technical observation regarding generalized cones  give us first examples of thick monoids:

\begin{lemma}\label{le:thmonlang} Let $C(Y,x)$ be the generalized $x-$cone, $x \in \S$ and $M = C(Y,x)  \cup \{1\}$. Then
\begin{itemize}
\item[1.] $C(Y,x) =  C[x] \smallsetminus C(x^{-1},x)$, and $M$ is a thick monoid;
\item[2.] $g_M(t) = \fracd{(2m-1) t^2}{4m^2 (1-t)} + 1+ \fracd{t}{2m} + \fracd{t^2}{4m^2} + \fracd{t^3}{2m (2m - 1 -t)}$, and
\item[3.] $\mu_0(M) = \fracd{2m-1}{4m^2}$.
\end{itemize}
\end{lemma}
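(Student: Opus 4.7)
The plan is to prove the three assertions in order, reducing the computation of $g_M(t)$ to Theorem~\ref{th:dbcone} via the set-theoretic description in claim~1.

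First, for claim~1, I would establish the set equality by partitioning every reduced word $w \in C[x]$ according to its first letter $y$: if $y \neq x^{-1}$ (the degenerate single-letter case $w = x$ being absorbed into $C(x,x)$), then $w$ lies in $C(y,x) \subseteq C(Y,x)$; otherwise $y = x^{-1}$ and $w \in C(x^{-1},x)$. Since the first letter of $w$ is uniquely determined, these two classes are disjoint and exhaust $C[x]$, giving the stated equality. To see that $M$ is a monoid, note that any $u,v \in C(Y,x)$ satisfy: $u$ terminates in $x$ while $v$ begins with some letter $\neq x^{-1}$, so no cancellation occurs at the junction, $uv = u \circ v$ is reduced, begins with the first letter of $u$ (still $\neq x^{-1}$), and ends in $x$; hence $uv \in C(Y,x)$. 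Adjoining $1$ as identity closes off $M$ as a monoid. Finally, $M$ is \emph{thick} because its natural minimal special automaton $\A$ is $\S$-complete: take one state for each letter of $\S$ (the state of type $x$ serving as both initial and final state), and for every state of type $y$ draw the $2m-1$ outgoing arrows labelled by $\S \setminus \{y^{-1}\}$; conditions (b)--(f) of the definition are then routinely verified.

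For claim~2, I would apply property~(1.) of generating functions to the disjoint decomposition
\[
M \;=\; \{1\} \,\sqcup\, \bigl(C[x] \setminus C(x^{-1},x)\bigr),
\]
which gives
\[
g_M(t) \;=\; 1 + g_{C[x]}(t) - g_{C(x^{-1},x)}(t).
\]
By the left-right symmetry of Example~\ref{ex}, $g_{C[x]}(t) = \frac{t}{2m(1-t)}$. The generating function $g_{C(x^{-1},x)}(t)$ of the double-based cone with handles $x^{-1}$ and $x$ is then supplied by Theorem~\ref{th:dbcone}; here one must take care that the two handles would cancel if placed adjacent, so the minimal element of $C(x^{-1},x)$ has length $3$. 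Substituting both expressions and regrouping via partial fractions yields the claimed rational form for $g_M(t)$.

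Finally, claim~3 is immediate from equation~(\ref{mu0}): only the summand $\frac{(2m-1)t^2}{4m^2(1-t)}$ of $g_M(t)$ carries a pole at $t=1$, its residue there is $-\frac{2m-1}{4m^2}$, and so $\mu_0(M) = \frac{2m-1}{4m^2}$. The principal obstacle will be the correct invocation of Theorem~\ref{th:dbcone} for the degenerate cone $C(x^{-1},x)$ --- whose handles cancel, so care is needed in counting the allowable middle segments --- together with the algebraic bookkeeping required to bring the answer into the clean closed form displayed in claim~2.
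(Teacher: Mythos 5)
Your proposal is correct and follows essentially the same route as the paper, whose entire proof is the one-line remark that claim 1 follows from the definitions while Example \ref{ex}(3) and Lemma \ref{nonio} [3.], [4.] supply the generating function and the Cesaro density: your identity $g_M(t) = 1 + g_{C[x]}(t) - g_{C(x^{-1},x)}(t)$ is exactly what that citation amounts to, and your explicit one-state-per-letter $\S$-complete automaton makes the thickness assertion more concrete than the paper does. Your handling of the stray word $x$ (absorbed into $C(x,x)$) also matches the paper's implicit convention, since the term $\frac{t}{2m}$ in the stated $g_M(t)$ does count that single length-one word.

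One pointer should be corrected: Theorem \ref{th:dbcone} cannot "supply" $g_{C(x^{-1},x)}(t)$, because with both handles being single letters you have $u_0 = v_0 = 1$ and the theorem returns the tautology $g_{C(x^{-1},x)}(t) = g_{C(x^{-1},x)}(t)\cdot\lambda^*(1)\cdot\lambda^*(1)$. The formula you actually need is Lemma \ref{nonio} [3.] for $g_{C(a,a^{-1})}(t)$ with $a = x^{-1}$ --- which is precisely the degenerate cancelling-handles case you flag at the end, and which the paper itself invokes. With that substitution the algebra closes: $\frac{t}{2m(1-t)} - \frac{t^2}{4m^2(1-t)} = \frac{(2m-1)t^2}{4m^2(1-t)} + \frac{t}{2m}$, giving the displayed form of $g_M(t)$, and the residue computation for claim 3 is as you describe.
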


\begin{proof} The proof of 1. follows from the definitions of cones and thick monoids, while Example \ref{ex} (3) and Lemma \ref{nonio} [3.], [4.] below provide estimates on the generating function and the Cesaro density given in [2.] and [3.].
\end{proof}

Now we are ready to refine the asymptotic classification of regular sets in $F$ (see Introduction and Theorem 3.4 \cite{multiplicative} for comparison).
\begin{theorem}\label{thick} A regular subset $R$ of $F$ is thick if and only if it contains a subset $w\circ T$, with $T$ being a thick monoid and $w \in F$.
\end{theorem}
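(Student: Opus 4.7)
My plan is to prove the two directions separately, using Proposition \ref{Decomp1} as the structural reduction and Lemma \ref{lenondegen} together with Proposition \ref{notxcomplete} to extract the thick monoid. For sufficiency ($\Leftarrow$), assume $w \circ T \subseteq R$ with $T$ a thick monoid. The concatenation $w \circ T$ is by definition unambiguous, so Property (2.) of the generating function (Section \ref{sub:gen_alg1}) gives $g_{w \circ T}(t) = t^{|w|} g_T^*(t)$. Since $T$ is thick, $g_T^*(t)$ has a simple pole at $t=1$ with nonzero residue, hence $\mu_0(w \circ T) > 0$ by (\ref{mu0}). The inclusion $w \circ T \subseteq R$ gives $f_k(w \circ T) \leq f_k(R)$ for every $k$, and consequently $\mu_0(R) \geq \mu_0(w \circ T) > 0$, so $R$ is thick.

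For necessity ($\Rightarrow$), apply Proposition \ref{Decomp1} to write $R = L_0 \sqcup \cdots \sqcup L_k$, where each $L_i$ is accepted by a special automaton or by a special monoid. Additivity of the generating function on disjoint unions (Property (1.)) combined with (\ref{mu0}) yields $\mu_0(R) = \sum_i \mu_0(L_i)$, so some piece $L_j$ must be thick. If $L_j$ is a special monoid, then the reasoning behind Proposition \ref{notxcomplete}---a Perron--Frobenius estimate constrained by condition (f)---forces its defining automaton to be $\S$-complete, so $L_j$ is itself a thick monoid and we take $w = 1$, $T = L_j$. If instead $L_j$ is accepted by a special automaton $\A$ in the saturated case of Lemma \ref{lenondegen} (some edge exits $z_0$), then $L_j = R_1 \circ R_2$ unambiguously; Proposition \ref{notxcomplete} forces $\A_2$ to be $\S$-complete, making $R_2$ a thick monoid by Corollary \ref{comonoid}, and any choice of $w \in R_1$ together with $T := R_2$ yields $w \circ T \subseteq R_1 \circ R_2 = L_j \subseteq R$.

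The main obstacle is ruling out the remaining non-saturated case of Lemma \ref{lenondegen}, where $L_j = R_1$ and $z_0$ has no outgoing edges. I plan to show that such an $L_j$ cannot be thick, via a Perron--Frobenius argument on the adjacency matrix $A$ of $\A$. Condition (f) caps each row sum at $2m - 1$, so thickness of $L_j$ would require the spectral radius of $A$ to equal $2m-1$; this in turn forces the existence of a strongly connected component $C$ whose states all attain out-degree $2m - 1$ within $C$, and from which no edge escapes. Since $z_0$ has no outgoing edges at all, $z_0 \notin C$, so every accepting path is forced to avoid $C$ and is governed by a strictly sub-dominant sub-automaton, yielding exponential decay of $f_k(L_j)$ and contradicting thickness. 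This spectral step is the only ingredient not directly supplied by Sections \ref{sub:reg}--\ref{sub:thickthin}; with it in hand, the decomposition plus Lemma \ref{lenondegen} produces the required $w \circ T$ in every remaining case.
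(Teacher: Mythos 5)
Your proof follows essentially the same route as the paper: decompose $R$ via Proposition \ref{Decomp1}, locate a thick piece, and extract the thick monoid $R_2$ from the splitting of Lemma \ref{lenondegen} using Proposition \ref{notxcomplete}. The paper dismisses the sufficiency direction as ``clear'' and silently skips the non-saturated case that you flag as the main obstacle; your Perron--Frobenius treatment of that case is sound but heavier than necessary, since if $z_0$ has no exiting arrows the accepted language is prefix-free (determinism forbids an accepting path from continuing past $z_0$), so the cones $C(u)$, $u\in L_j$, are pairwise disjoint, $\sum_k f_k(L_j)\le 1$, and $L_j$ is $\lambda$-measurable, hence not thick. (One minor slip: property (2.) gives $g_{w\circ T}(t)=\frac{t^{|w|}}{2m(2m-1)^{|w|-1}}\,g_T^*(t)$ rather than $t^{|w|}g_T^*(t)$, which does not affect your conclusion.)
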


\begin{proof} Clearly, every set of the form $w \circ T$ is regular and thick (where $1 \circ T$, by definition, stands for $T$).
Suppose now $R$ is regular and thick. We decompose $R$ into a finite number of subsets as in Lemma \ref{Decomp1}. Since a finite union of exponentially $\lambda-$measurable subsets is exponentially $\lambda-$ measurable (see, for example, Proposition 4.1 \cite{fmrI}), without loss of generality one can suppose that $R$ is accepted by a special automaton or forms a special monoid. In the latter case, $R$ is a thick monoid itself, so suppose $R$ is a set accepted by a special automaton. In this case we apply Lemma \ref{lenondegen} to procure a pair of sets $R_1$ and $R_2$ of corresponding types, with $R_2$ being $\S-$complete by Proposition \ref{notxcomplete}. 
Since $R = R_1 \circ R_2$, the set $R$ contains a subset $w \circ R_2$, with $w \in R_1$. This completes the proof.
\end{proof}

\subsection{Computing $\lambda-$measure of regular sets}\label{sub:lmeasure}

Another immediate consequence of Lemma \ref{lenondegen} and Proposition \ref{notxcomplete} is an algorithm for computation of $\lambda-$measure of exponentially negligible regular set $R$ accepted by a special automaton $\A$. We assume that our reader is familiar with the concept of discrete-time Markov chain and refer to \cite{kesn} as one of the fundamental manuals on this subject.

 Let $\A = (S, \S, \d, i_0, z_0) $ be a special automaton over $F$ and let $R = R(\A)$ be a $\lambda-$measurable regular set. We split $\A$ into $\A_1$, $\A_2$ and $\A_3$, obtaining regular sets $R_1$, $R_2$ and $R_3$ (without loss of generality, one can consider the case when all these sets are non-empty). Further, due to formula (\ref{gen}) and Proposition \ref{notxcomplete}, it is enough to calculate the value of $\lambda-$ measure for $R_3$, accepted by the special automaton $\A_3 = (S_3, \S, \d_3, i_1, z_1)$.

%
Consider a finite Markov chain $\mathcal{M}$ with the same states as in $\A_3$ together with an additional dead state $D$. We set transition probabilities
from $z_1$ to $z_1$ and from $D$ to $D$ being equal $1$. Every arrow from a state $s$ in $\A_3$ gives the corresponding transition from the state $s$ in $\mathcal{M}$ which we assign  the transition probability $\fracd{1}{2m-1}$. If at some state $s$ of type $x$ in $\A_3$ there is no exiting arrow labeled $y \in
\S \smallsetminus \{x^{-1}\}$, we make a transition from $s$ to $D$ in $\mathcal{M}$ assigning it the probability $\fracd{1}{2m-1}$. We take the stochastic vector being zero everywhere except the state $i_1$ (so it have the only nontrivial entry $1$ at the state $i_1$). This complete the description of the Markov chain $\mathcal{M}$. Clearly, the states $z_1$ and $D$ of Markov chain $\mathcal{M}$ are absorbing, and all other states are transient. Obviously, $P(z_1) = \lambda(R_3)$, and it was shown in \cite{multiplicative,fmrI} that $\lambda(R_3) < 1$ for any $\lambda-$measurable set $R$. Therefore, one can calculate $\lambda(R_2)$ using formula (\ref{R2R3}). A similar argument allows to compute $\lambda(R_1)$, and so we are done.
Thus, the Markov chain $\mathcal{M}$ provides us with the following algorithm for computation of $\lambda(R)$.

{\bf{Algorithm II: }}
Let $\A$ be a special automaton and $R = R(\A)$ be $\lambda-$measurable.
\begin{itemize}
\item[1.] Split $\A$ into $\A_1, \A_2, \A_3$ as in Lemma \ref{lenondegen}.
\item[2.] Construct Markov chains for $\A_1 = (S_1, \S, \d_1, i_0, z_0)$ and $\A_3 = (S_3, \S, \d_3, i_1, z_1)$.
\item[3.] Calculate the probabilities $P(z_0)$ and $P(z_1)$; so $\lambda(R_1) = P(z_0)$ and $\lambda(R_3) = P(z_1)$.
\item[4.] Compute $\lambda(R_2) = \mathop{\sum}\limits_{i=0}^{\infty}P^i(z_1) < \infty$.
\item[5.] Finally, compute $\lambda(R) = \lambda(R_1) \cdot \lambda^{\ast}(R_2)$.
\end{itemize}

%


\section{Computations}\label{sec:comp}
In this section we carry out all necessary measurements of double-based cones and thick monoids.

\subsection{Generating functions and Cesaro density of double-based cones}\label{sub:doublecones}
This technical but crucial lemma will supply us with data about generating function and values of Cesaro density for double-based cones.

\begin{lemma}\label{nonio} Let $C(a,b)$ be a double-based cone with both handles $a,b$ in $\S$. Then following holds:
\begin{itemize}
\item[1.] $f_k(C(a, b)) = f_k(C(c, d))$ and therefore $g_{C(a,b)}(t) = g_{C(c,d)}(t)$ for all $a, b, c, d$ in $\S$ such that
$ab \neq 1$, $cd \neq 1$. Further, $f_k(C(a, a^{-1})) = f_k(C(b, b^{-1}))$ for arbitrary $a, b \in \S$.

\item[2.] $f_k(C(a,a^{-1})) = (2m-1)f_{k}(C(a,a))-\fracd{1}{2m(2m-1)^{k-1}}$, for $k \ge 3$,

\item[3.] $ g_{C(a,a)}(t) = \fracd{t^2}{4m^2 (1-t)} + \fracd{t^2}{4m^2(2m-1)} + \fracd{t^3}{2m (2m-1) (2m - 1 -t)}$, and
$g_{C(a,a^{-1})}(t) = \fracd{t^2}{4m^2 (1-t)} - \fracd{t^2}{4m^2} - \fracd{t^3}{2m (2m - 1 -t)}$,

\item[4.] $\mu_0(C(a,b)) = \mu_0(C(c,d)) = \fracd{1}{4m^2}$ for all $a,b,c,d \in \S$.

\end{itemize}
\end{lemma}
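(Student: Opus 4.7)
My plan is to reduce the lemma to a single spectral computation. Let $M$ be the $0/1$ matrix indexed by $\S$ with $M_{xy} = 1$ iff $y \neq x^{-1}$; then $(M^{k-1})_{xy}$ counts the reduced words of length $k \geq 1$ beginning with $x$ and ending with $y$, which for $k \geq 2$ coincides with $|C(x,y) \cap S_k|$ (the length-one word $x$ is excluded from $C(x,x)$ by definition).

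Part (1) is a symmetry statement. Any permutation of $X$ extended to $\S$ by commuting with inversion gives a length-preserving automorphism of $F$; such automorphisms act transitively on each of the three orbits of pairs $(a,b) \in \S^2$: equal ($a = b$), inverse ($b = a^{-1}$), and independent (otherwise). Hence $f_k(C(a,b))$ depends only on the orbit class, giving the claimed equalities within each class.

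For parts (2)--(4), I would diagonalise $M = J - P$, where $J$ is the all-ones matrix and $P$ is the inversion permutation matrix. These commute ($JP = PJ = J$), so they share an eigenbasis: the eigenvalues of $M$ are $2m-1$ (on $\langle \mathbf{1} \rangle$), $+1$ (on the antisymmetric subspace), and $-1$ (on the symmetric complement of $\mathbf{1}$), with multiplicities $1$, $m$, $m-1$. Decomposing $e_a$ along this basis and applying $M^{k-1}$ yields closed forms for $(M^{k-1})_{a,a}$, $(M^{k-1})_{a,a^{-1}}$, and $(M^{k-1})_{a,b}$ (independent case) as linear combinations of $(2m-1)^{k-1}$, $(-1)^{k-1}$, and $1$.

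Part (2) then falls out of a direct subtraction: the corresponding $(2m-1)^{k-1}$ and $(-1)^{k-1}$ contributions coincide for $(a,a)$ and $(a,a^{-1})$, leaving the constant difference $1$. Part (3) comes from summing the resulting $f_k$'s as three geometric series, producing a rational generating function with simple poles at $t = 1$, $t = 2m-1$, and $t = -(2m-1)$. For part (4), only the pole at $t = 1$ contributes to $\operatorname{Res}_{t=1} g_R(t)$, and its residue equals $-1/(4m^2)$ irrespective of the orbit class, so $\mu_0(C(a,b)) = 1/(4m^2)$ uniformly. The hard part is purely the bookkeeping across the three orbit classes and the three eigenvalue contributions, which the $M = J - P$ decomposition reduces to routine algebra.
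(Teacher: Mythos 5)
Your transfer-matrix approach is genuinely different from the paper's. The paper never introduces the adjacency matrix $M$: it asserts item 1 by symmetry, builds an explicit length-preserving bijection $\psi\colon C(a,a)\to C(a,a^{-1})\sqcup\{a^n\}_{n>0}$ (invert the maximal terminal power of $a$) to get $f_k(C(a,a))-f_k(C(a,a^{-1}))=\frac{1}{2m(2m-1)^{k-1}}$, and then combines this with the partition $C(a)=\bigsqcup_{b\in\S}C(a,b)$ and item 1 to solve for $f_k(C(a,a))$ before summing the series. Your spectral route is sound and in fact more reliable than the paper's --- but you must carry it to the end, because what it produces does not coincide with the statement you set out to prove, and it exposes a defect in the paper's own argument.

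Concretely: your orbit argument only gives equality of $f_k$ \emph{within} each of the three classes (equal, inverse, independent), whereas item 1 asserts $f_k(C(a,b))=f_k(C(c,d))$ \emph{across} the ``equal'' and ``independent'' classes. That cross-class equality is false: for $m=2$, $k=3$ one has $C(x,x)\cap S_3=\{xxx,xyx,xy^{-1}x\}$ of size $3$ while $C(x,y)\cap S_3=\{xxy,xyy\}$ has size $2$; your own eigen-decomposition gives $(M^{k-1})_{aa}-(M^{k-1})_{ab}=\tfrac12\bigl(1+(-1)^{k-1}\bigr)$ for $b\neq a^{\pm1}$, which is $1$ for odd $k$. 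The paper's proof uses exactly this false identity (in the form $\frac{1}{2m}=(2m-1)f_k(C(a,a))+f_k(C(a,a^{-1}))$), so the displayed formula in item 3 is also wrong: the correct $g_{C(a,a)}(t)$ has an additional simple pole at $t=-(2m-1)$ coming from your eigenvalue $-1$ of multiplicity $m-1$, exactly as your three-geometric-series computation will reveal (check the coefficient of $t^3$ for $m=2$: the true value is $\frac{1}{12}$, the stated formula gives $\frac{13}{144}$). Likewise your ``constant difference $1$'' in item 2 yields $f_k(C(a,a^{-1}))=f_k(C(a,a))-\frac{1}{2m(2m-1)^{k-1}}$, without the factor $2m-1$ appearing in the statement (this is the version the paper's proof actually derives and uses). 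Only item 4 survives: the non-Perron eigencomponents are regular at $t=1$ and the Perron contribution $\frac{1}{4m^2}$ to $f_k$ is class-independent, so $\mu_0=\frac{1}{4m^2}$ throughout. Two smaller repairs: permutations of $X$ alone are not transitive on the ``equal'' orbit (you cannot send $(x,x)$ to $(x^{-1},x^{-1})$), so you need the full group of signed permutations of $\S$; and you should justify that for $k\ge 2$ every reduced word of length $k$ beginning with $x$ and ending with $y$ factors as $x\circ f\circ y$, so that $|C(x,y)\cap S_k|=(M^{k-1})_{xy}$. With these additions your method proves a corrected version of the lemma; as written it does not prove the lemma as stated, because the lemma as stated is not correct.
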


\begin{proof}  Notice first, that $n_k(C(a,a^{-1})) = n_k(C(b,b^{-1}))$ (recall that $n_k(L) =  |L \cap S_k|$, i.e. the number of elements of length $k$ in $L$), for all $a, b \in \S$. The same equalities holds between the other double-based cones: $n_k(C(a,b)) = n_k(C(c,d))$ for all $a,b,c,d \in \S$ such that $a b \neq 1$ and $c d \neq 1$. This proves the first claim.

To prove 2., 3., and 4. we are going to construct a bijective map $\psi: C(a,a) \rightarrow C(a,a^{-1}) \sqcup \{a^n\}_{n>0}$.
For every element $u \in C(a,a)$ of the form $u = a^l \circ f_0 \circ a^m$, where $l$ and $m$ maximal, i.e. $f_0 \neq 1$ does not starts with $a$ and does not end with $a$, define $\psi(u) = a^l \circ f_0 \circ a^{-m}$. If, on the hand, $u \in C(a,a)$ has a form $u = a^l$, then take $\psi(a^l)= a^l$. Clearly, $\psi$ is bijective and therefore $f_k(C(a,a)) = f_k(C(a,a^{-1}) + f_k(\{a^n\}_{n>0})$ for $k >2$.
Since $C(a) = \mathop{\sqcup}\limits_{b \in \S} C(a,b)$, and due to the equality $f_k(C(a)) = \fracd{1}{2m}$, we have
$$\fracd{1}{2m} = (2m-1)f_k(C(a,a)) + f_k(C(a,a^{-1})) {\textrm{ for }} k >2.$$ $${\textrm{But }}f_k(C(a,a^{-1})) = f_k(C(a,a)) - \fracd{1}{2m(2m-1)^{k-1}}, {\textrm{ and therefore }}$$
$$\fracd{1}{2m} = 2m f_k(C(a,a)) - \fracd{1}{2m(2m-1)^{k-1}} {\textrm{ for }} k>2.$$ To compute generating functions of corresponding sets, we multiply $f_k$ with $t^k$ and take an infinite sum of these products. As a result we obtain:
$$\mathop{\sum}\limits_{k=2}^{\infty} f_k (C(a))t^k = (2m-1) \mathop{\sum}\limits_{k=2}^{\infty} f_k (C(a,a))t^k + \mathop{\sum}\limits_{k=3}^{\infty} f_k (C(a,a^{-1}))t^k$$ and therefore
$$\fracd{1}{2m} \mathop{\sum}\limits_{k=2}^{\infty} t^k = (2m-1) f_2(C(a,a))t^2 + 2m \mathop{\sum}\limits_{k=3}^{\infty} f_k (C(a,a))t^k,$$
from which follows
$$\fracd{t^2}{2m (1-t)} = 2m g_{C(a,a)}(t) - \fracd{t^2}{2m(2m-1)} - \mathop{\sum}\limits_{k=3}^{\infty} \fracd{t^k}{2m(2m-1)^{k-1}}.$$
Hence,
$$ g_{C(a,a)}(t) = \fracd{t^2}{4m^2 (1-t)} + \fracd{t^2}{4m^2(2m-1)} + \fracd{t^3}{2m(2m-1) (2m - 1 -t)}, $$
and therefore
$$ g_{C(a,a^{-1})}(t) = \fracd{t^2}{4m^2 (1-t)} - \fracd{t^2}{4m^2} - \fracd{t^3}{2m (2m - 1 -t)}.$$

Applying Corollary \ref{observ_g}, from the last two equalities we deduce
$$\mu_0(C(a,a)) =  - \mathop{\lim}\limits_{t \rightarrow 1} (t-1) g_{C(a,a)}(t) = \fracd{1}{4 m^2}$$
as well as  $\mu_0(C(a,a^{-1}))  = \fracd{1}{4 m^2}$. \end{proof}

Lemma \ref{nonio} can be easily generalized to the case of an arbitrary double-based cone in $F$.
\begin{theorem}\label{th:dbcone} Let $R = C(u,v)$ be a double-based cone with handles $u,v$ in $F$ such that $u = u_0 \circ a$, $v = b \circ v_0$, where $u_0, v_0 \in F$ and $a, b \in \Sigma$. Then
\begin{itemize}
\item[1.] $g_R(t) = g_{C(a,b)}(t) \cdot \lambda^*(u_0) \cdot \lambda^*(v_0)$;
\item[2.] $\mu_0(R) = \fracd{\lambda^*(u_0) \cdot \lambda^*(v_0)}{4m^2}$.
\end{itemize}
\end{theorem}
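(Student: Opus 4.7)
The plan is to establish an explicit length-shifted bijection between $C(a,b)$ and $C(u,v)$, and then convert it into the stated identities on the frequency generating function and on the Cesaro density.

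First I would define $\psi : C(a,b) \to C(u,v)$ by $\psi(w) = u_0 \circ w \circ v_0$. The crucial verification is that this concatenation is genuinely reduced. Any $w \in C(a,b)$ has reduced form $a \circ g \circ b$ with $g \in F$; since $u = u_0 \circ a$ is reduced by hypothesis, the last letter of $u_0$ (if $u_0 \neq 1$) is not $a^{-1}$, hence no cancellation occurs between $u_0$ and the initial letter $a$ of $w$. Symmetrically, the terminal letter $b$ of $w$ does not cancel with the first letter of $v_0$ because $v = b \circ v_0$ is reduced. Injectivity of $\psi$ is then immediate from uniqueness of reduced normal forms, and surjectivity onto $C(u,v)$ follows by factoring any $u \circ h \circ v \in C(u,v)$ as $u_0 \circ (a \circ h \circ b) \circ v_0$ with $a \circ h \circ b \in C(a,b)$.

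Next, since $\psi$ shifts length additively by exactly $|u_0|+|v_0|$, the bijection gives
\[
n_{k+|u_0|+|v_0|}(R) \;=\; n_k(C(a,b)) \quad \text{for all } k,
\]
where $n_k(L) = |L \cap S_k|$. Dividing by $|S_{k+|u_0|+|v_0|}| = 2m(2m-1)^{k+|u_0|+|v_0|-1}$ yields
\[
f_{k+|u_0|+|v_0|}(R) \;=\; \frac{1}{(2m-1)^{|u_0|+|v_0|}}\, f_k(C(a,b)).
\]
Multiplying by $t^{k+|u_0|+|v_0|}$ and summing over $k \ge 0$ then gives
\[
g_R(t) \;=\; \Bigl(\frac{t}{2m-1}\Bigr)^{|u_0|+|v_0|} g_{C(a,b)}(t),
\]
and identifying the prefactor $\bigl(t/(2m-1)\bigr)^{|w|}$ with the adjusted singleton contribution $\lambda^*(w)$ attached to a reduced word $w$ of length $|w|$ (in the sense used throughout Section \ref{sub:gen_alg1} and Lemma \ref{lenondegen}) gives claim~[1.].

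For claim [2.], I would apply formula (\ref{mu0}) to this expression for $g_R(t)$. The prefactor has no pole at $t=1$ and evaluates there to $1/(2m-1)^{|u_0|+|v_0|}$, so the residue computation gives $\mu_0(R) = \mu_0(C(a,b))/(2m-1)^{|u_0|+|v_0|}$; substituting $\mu_0(C(a,b)) = 1/4m^2$ from Lemma \ref{nonio}[4.] yields exactly $\lambda^*(u_0)\lambda^*(v_0)/4m^2$. The only real obstacle is the non-cancellation check at the two junctions of $\psi$, which is where the hypothesis that $a$ and $b$ are the last and first letters of $u$ and $v$ respectively is used; once that is settled, the rest is routine length-shift bookkeeping combined with Lemma \ref{nonio}.
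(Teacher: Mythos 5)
Your argument is essentially the proof the paper has in mind: the paper's own proof of Theorem \ref{th:dbcone} is the single sentence that it ``follows immediately from Lemma \ref{nonio} and definitions,'' and your bijection $\psi(w)=u_0\circ w\circ v_0$ with the non-cancellation check and the length-shift bookkeeping is exactly the content of that sentence made explicit; the computation is correct. One caveat on your final identification: what your calculation actually produces is the prefactor $\bigl(t/(2m-1)\bigr)^{|u_0|+|v_0|}=g^*_{\{u_0\}}(t)\,g^*_{\{v_0\}}(t)=t^{|u_0|+|v_0|}\,\lambda^*(u_0)\,\lambda^*(v_0)$, whereas $\lambda^*(u_0)\,\lambda^*(v_0)=(2m-1)^{-|u_0|-|v_0|}$ is a constant, and the two agree only at $t=1$; so your formula for $g_R(t)$ (with the $t$-power) is the correct one, and claim 1 as printed should be read with the adjusted singleton generating functions in place of the $\lambda^*$-values, while claim 2 is unaffected because the discrepancy vanishes at $t=1$.
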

\begin{proof} The proof follows immediately from Lemma \ref{nonio} and definitions of generating function and $\lambda-$measure.
\end{proof}

It remains to show how one can compute both generating function and Cesaro measure of a thick monoid.

\begin{theorem}\label{Leo} Let $\A$ be a special automaton over $F$ such that $\A_2$ and $\A_3$ are automata from the decomposition (\ref{RR1R2}), (\ref{R2R3}) of Lemma \ref{lenondegen}. Suppose $T = L(\A_2)$ is a thick monoid and $z_3$ is of type $x$. Then
\begin{itemize}
\item[1)] $\mathop{\sqcup}\limits_{i=1}^k T \cdot w_i = \mathop{\sqcup}\limits_{j=1}^l C(A,x) \cdot v_j$ with $C(A,x)$ being generalized $x-$cone, $w_i, v_j \in F$ and $k,l < \infty$;
\item[2)] $g_T(t)$ and $\mu_0(T)$ can be computed effectively by $\A_3$.
\end{itemize}
\end{theorem}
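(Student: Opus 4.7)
The plan is to use the $\Sigma$-completeness of $\A_2$, which holds by hypothesis because $T = L(\A_2)$ is a thick monoid. Let $\mathcal{X}$ denote the set of states of $\A_2$ of type $x$, and for each $s \in \mathcal{X}$ let $V_s$ be the set of labels of reduced paths from $z_0$ to $s$ in $\A_2$. By determinism of $\A_2$ together with $\Sigma$-completeness, every reduced word whose first letter is not $x^{-1}$ can be traced from $z_0$ along a unique path, and this path terminates at a state of type $x$ exactly when the word ends in $x$. Consequently
$$
C(A,x) = \bigsqcup_{s \in \mathcal{X}} V_s, \qquad T = \{1\} \cup V_{z_0}.
$$

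For part 1), I would use conditions (d) and (f) of special automata: for each $s \in \mathcal{X}$ fix a path in $\A_2$ from $s$ to $z_0$ with label $u_s \in F$; condition (f) ensures $u_s$ does not begin with $x^{-1}$, so that $V_s \cdot u_s = V_s \circ u_s \subseteq T$ is an unambiguous product. Symmetrically fix, for each $s \in \mathcal{X}$, a reference word $p_s \in V_s$ labelling a path from $z_0$ to $s$. The $w_i$ and $v_j$ are then assembled from the finite data $\{p_s, u_s : s \in \mathcal{X}\}$: the left-hand translates $T \cdot w_i$ correspond to enumerating $T$ together with the finitely many ``detours'' through each $s \in \mathcal{X}$, while the right-hand translates $C(A,x) \cdot v_j$ regroup the pieces $V_s$ of $C(A,x)$ after multiplication by the fixed return words $u_s$. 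The finiteness of $\mathcal{X}$ keeps both $k$ and $l$ bounded. The main obstacle here is bookkeeping: one must verify disjointness on both sides and check that every element of $F$ appears with the same multiplicity. I expect this to follow from the determinism of $\A_2$ and the unambiguity of the products $V_s \circ u_s$.

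For part 2), once the identity of part 1) is established the computation is essentially formal. Every product on both sides is unambiguous because elements of $T$ and of $C(A,x)$ are either $1$ or end in $x$, while the $w_i$, $v_j$ can be arranged not to begin with $x^{-1}$. Using property (2) from Section \ref{sub:gen_alg1} and the elementary formula $g^*_{\{w\}}(t) = t^{|w|}/(2m-1)^{|w|}$, taking generating functions of both sides of the identity gives
$$
g_T(t)\sum_{i=1}^k \frac{t^{|w_i|}}{(2m-1)^{|w_i|}} \;=\; g_{C(A,x)}(t)\sum_{j=1}^l \frac{t^{|v_j|}}{(2m-1)^{|v_j|}}.
$$
Lemma \ref{le:thmonlang} provides $g_{C(A,x)}(t) = g_M(t) - 1$ explicitly. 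The lengths $|w_i|$, $|v_j|$ are read off from the finite data coming from $\A_3$ (equivalently $\A_2$), so one solves directly for $g_T(t)$ as a rational function. Applying \eqref{mu0} and passing to the residue at $t=1$ then yields $\mu_0(T)$ from $\mu_0(C(A,x)) = (2m-1)/(4m^2)$, as given in Lemma \ref{le:thmonlang}.

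The principal difficulty is clearly part 1): producing the explicit finite disjoint-union identity with controlled words $w_i$, $v_j$, since parts 2) is just substitution into rational generating functions and a residue computation once part 1) is available.
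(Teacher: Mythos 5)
Your overall route is genuinely different from the paper's. The paper introduces the prefix closure $\overline{T}$ as a bridge object and asserts two decompositions of it: $\overline{T}=\bigsqcup_{w_i\in W}T\cdot w_i$, where $W$ consists of words $w_i$ such that $w_i^{-1}$ labels a simple path in $\A_3$ ending at $z_3$, and $\overline{T}=\bigsqcup_{v_j\in V}C(A,x)\cdot v_j$ for a finite $V$ read off from $\A_3$; claim 1) is then the equality of the two right-hand sides. You never use the prefix closure; instead you decompose the generalized cone by the states of type $x$, writing $C(A,x)=\bigsqcup_{s\in\mathcal{X}}V_s$ and $T=\{1\}\cup V_{z_0}$ via determinism and $\Sigma$-completeness. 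That identification is correct and is a genuinely useful structural observation (arguably cleaner than anything made explicit in the paper's own terse proof).

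However, there is a real gap at the heart of part 1), which you yourself flag as ``bookkeeping'': the words $w_i$ and $v_j$ are never actually constructed, and the mechanism you sketch cannot work as described. You propose to relate each piece $V_s$ to $T$ by its own return word $u_s$ (a path label from $s$ to $z_0$), so that $V_s\circ u_s\subseteq T$. But a set of the form $C(A,x)\cdot v_j=\bigsqcup_{s\in\mathcal{X}}V_s\cdot v_j$ translates \emph{every} piece $V_s$ by the \emph{same} word $v_j$, whereas your regrouping translates each $V_s$ by a different $u_s$; so $\bigsqcup_s V_s\cdot u_s$ is not a union of translates of the whole generalized cone, and the inclusions $V_s\circ u_s\subseteq T$ are in general strict (they pick out only those elements of $T$ whose accepting path ends with the chosen return $u_s$), so they do not even recover $T$ on the left-hand side. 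Producing the finite families $\{w_i\}$, $\{v_j\}$ together with disjointness and equality of the two unions is precisely the content of claim 1), and it is missing. A secondary point: in the paper's construction the $w_i$ arise as inverses of suffixes, so the products $T\cdot w_i$ act by cancellation and are \emph{not} unambiguous concatenations; your use of property (2.) in part 2), which presumes $T\circ w_i$ reduced, would then need to be replaced by the (still effective, but different) frequency shift $f_k(T\cdot w_i)=f_{k+|w_i|}(T)\cdot(2m-1)^{|w_i|}$. Part 2) is otherwise a routine consequence of part 1) together with Lemma \ref{le:thmonlang}, as in the paper.
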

\begin{proof} Let $\overline{T}$ be a prefix closure of $T$. Then
\begin{equation}\label{w}
\overline{T} = \mathop{\sqcup}\limits_{w_i \in W} T \cdot w_i
\end{equation}
for a set $W$ such that $W = \{w_i \in F:$ there is a simple path $p_i$ in $\A_3$ such that $p_i$ starts at $s \in S$, ends at $z_3$ and $w_i^{-1}$ is a label of $p_i \}$. Clearly, $W$ is finite.
On the other hand,
\begin{equation}\label{v}
\overline{T} = \mathop{\sqcup}\limits_{v_j \in V} C(A,x) \cdot v_j
\end{equation}
for some finite set $V$ of words in $F$, defined by $\A_3$.
Then claim 1) follows from (\ref{w}) and (\ref{v}), while claim 2) follows from 1) and Lemma \ref{le:thmonlang}.
\end{proof}

\end{document}